%% file: main.tex
\documentclass[letterpaper]{article} 
\usepackage{authblk}
\usepackage{arxiv}  
\usepackage{times}  
\usepackage{helvet}  
\usepackage{courier}  
\usepackage[hyphens]{url}  
\usepackage{graphicx} 
\usepackage{natbib}
\usepackage[table]{xcolor}
\usepackage{color-edits}
\addauthor[Hui]{hui}{blue}
\usepackage{float} 
\usepackage{xspace}
\usepackage{mathtools}
\usepackage{bm}
\usepackage{wrapfig}
\usepackage{booktabs}
\usepackage{tikz}
\usepackage[utf8]{inputenc} 
\usepackage[T1]{fontenc}    
\usepackage{booktabs}       
\usepackage{amsfonts}       
\usepackage{nicefrac}       
\usepackage{microtype}      
\usepackage{lipsum}         
\usepackage{subcaption}
\usepackage{amsmath}
\usepackage{algorithm}
\usepackage{algpseudocode}
\usepackage{pgfgantt}
\usepackage{appendix}
\usepackage{mathrsfs}
\usepackage{multirow}

\usepackage{adjustbox}
\usepackage{threeparttable}
\usepackage{hyperref}       
\usepackage{tabularx}
\usepackage{amsthm, comment}
\usepackage{pifont}
\usepackage{enumitem}
\makeatletter
\newsavebox{\@tabnotebox}

\makeatother

\newcommand{\name}{PDHCG-II\xspace}
\title{\name: An Enhanced Version of PDHCG for Large-Scale Convex QP}
\newtheorem{theorem}{Theorem}[section] 

\theoremstyle{definition}

\usepackage{siunitx}
\theoremstyle{remark}
\author[1]{Hongpei Li}
\author[1]{Yicheng Huang}
\author[2]{Huikang Liu}
\author[2]{Dongdong Ge}
\author[3]{Yinyu Ye}

\affil[1]{Shanghai University of Finance and Economics}
\affil[2]{Shanghai Jiao Tong University}
\affil[3]{Stanford University}

\AtBeginDocument{%
  \addtolength\abovedisplayskip{-0.05\baselineskip}%
  \addtolength\belowdisplayskip{-0.1\baselineskip}%
  \addtolength\abovedisplayshortskip{-0.05\baselineskip}%
  \addtolength\belowdisplayshortskip{-0.1\baselineskip}%
}

\usepackage{setspace}
\usepackage{titlesec}
\titlespacing*{\section}{0pt}{-0.05\baselineskip}{-0.05\baselineskip}
\titlespacing*{\subsection}{0pt}{-0.075\baselineskip}{-0.075\baselineskip}
\titlespacing*{\subsubsection}{0pt}{-0.025\baselineskip}{-0.05\baselineskip}
\begin{document}
\maketitle

\input{text/abs}

\input{text/intro}
\input{text/algo}
\input{text/enhance}
\input{text/kkt}
\input{text/infeasibility}
\input{text/exp}
\bibliographystyle{plainnat}
\bibliography{main,ref}

\appendix
\end{document}

%% file: text/abs.tex
\begin{abstract}
Quadratic programming (QP) is a fundamental optimization model with wide-ranging applications in decision-making and machine learning, yet efficiently solving large-scale instances remains a major computational challenge.
Building upon the recently developed PDHCG framework, we propose \textbf{\name}, an enhanced first-order solver tailored for large-scale convex QPs. The proposed method explicitly exploits the quadratic structure of the objective and incorporates several key algorithmic innovations, including Halpern-type acceleration and a PID-controlled adaptive update of the primal–dual weight. To further improve practical performance, \name introduces a refined adaptive termination criterion for inner subproblems to prevent over-solving, together with an infeasibility detection mechanism for robust handling of ill-posed instances. Extensive numerical experiments demonstrate that \name consistently achieves $2.5\times$–$5\times$ speedups over PDHCG on standard QP benchmarks. To facilitate reproducibility and broader adoption, we release a CUDA-C implementation of \name as open-source software\footnote{\url{https://github.com/Lhongpei/PDHCG-II}}.
\end{abstract}

%% file: text/intro.tex
\section{Introduction}

Convex quadratic programming (QP) is a core problem in mathematical optimization and underpins a wide range of applications in machine learning, control, finance, and operations research, including support vector machines (SVMs) \citep{hearst1998support}, model predictive control (MPC) \citep{garcia1989model,morari1999model}, and portfolio optimization \citep{markowitz1952portfolio}. As modern applications increasingly involve large datasets and high-dimensional models, there is a growing demand for solvers capable of handling large-scale QP instances, often featuring millions of variables and constraints.

Interior-point methods (IPMs) \citep{vanderbei1999loqo} have long been regarded as the workhorse for QP due to their strong convergence guarantees and high solution accuracy. However, each IPM iteration requires solving large linear systems, typically involving costly matrix factorizations and substantial memory overhead, which severely limits their scalability and makes them unsuitable for very large problems or hardware-constrained environments. These challenges have renewed interest in first-order methods (FOMs), such as the alternating direction method of multipliers (ADMM) \citep{boyd2011distributed} and the primal-dual hybrid gradient (PDHG) method \citep{chambolle2011first}. By relying primarily on matrix–vector products, FOMs can better exploit problem sparsity and modern parallel architectures, including GPUs, making them an attractive alternative for large-scale QP.

Despite their scalability, standard first-order methods (FOMs) often suffer from slow convergence on ill-conditioned problems and exhibit pronounced stagnation when high-accuracy solutions are required. While substantial progress has been achieved for large-scale linear programming (LP), most notably through the PDLP solver \citep{applegate2021practical}, extending comparable performance gains to general QP remains challenging due to the more intricate geometry induced by the quadratic objective.

Recent efforts toward scalable QP solvers include PDQP \citep{lu2023practical}, PDHCG \citep{huang2025restarted}, and HPR-QP \citep{chen2025hpr}. Among these methods, PDHCG demonstrates particularly strong performance on large-scale instances by incorporating a conjugate gradient (CG) solver for the primal subproblem, thereby effectively addressing the quadratic term. However, while PDHCG effectively mitigates issues related to the condition number of the quadratic matrix, it exhibits instability in other aspects, which limits its overall robustness and effectiveness on general instances.

In this paper, we propose \textbf{\name}, an enhanced variant of PDHCG that bridges scalability and high-precision robustness through systematic algorithmic improvements. Our approach integrates modern acceleration techniques with adaptive control mechanisms. The main contributions are summarized as follows:
\begin{itemize}
\item \textbf{Algorithmic Enhancements.} We incorporate a Halpern-type acceleration scheme and a proportional–integral–derivative (PID) controller to adaptively balance primal and dual step sizes, proposed in~\citep{lu2025cupdlpx}. Besides, we propose a novel adaptive stopping criterion for primal subproblems for preventing over-solving. These components collectively achieve a substantial reduction in iteration counts across a wide range of problem instances.

\item \textbf{Solver Completeness and Efficiency.} Our proposed solver features a complete termination framework, incorporating criteria for detecting primal and dual infeasibility. Additionally, we introduce a specialized operator to handle quadratic objectives matrix, which is composed of sparse and low-rank matrices, allowing for efficient computation without the need to explicitly materialize dense matrices.


\item \textbf{High-Performance Open-Source Implementation.} We provide highly optimized implementations in both Julia and CUDA-C, designed to fully exploit GPU parallelism while reducing overhead. Extensive experiments on standard benchmarks (Maros–Mészáros and Mittelmann) show that \name consistently outperforms strong baselines such as HPR-QP and PDQP, particularly in high-accuracy regimes ($\epsilon = 10^{-8}$). The CUDA-C implementation is released as open-source software to facilitate reproducibility and community adoption.
\end{itemize}

%% file: text/algo.tex
\section{Algorithm Framework}
\label{algo}

We consider the following convex quadratic programming (QP) problem:
\begin{equation}
\begin{aligned}
    \min_{x \in \mathcal{X}} \quad & \frac{1}{2} x^\top Q x + c^\top x \\
    \text{s.t.} \quad & A x \in \mathcal{S}, \label{eq:QP}
\end{aligned}
\end{equation}
where $\mathcal{X} := \{ x \in \mathbb{R}^n \mid l_v \le x \le u_v \}$ denotes box constraints with componentwise bounds $l_v \in (\mathbb{R}\cup\{-\infty\})^n$ and $u_v \in (\mathbb{R}\cup\{+\infty\})^n$, $Q \succeq 0$ is a symmetric positive semidefinite matrix, $A \in \mathbb{R}^{m\times n}$ is the constraint matrix, and $\mathcal{S} := \{ s \in \mathbb{R}^m \mid l_c \le s \le u_c \}$ represents affine constraints with bounds $l_c \in (\mathbb{R}\cup\{-\infty\})^m$ and $u_c \in (\mathbb{R}\cup\{+\infty\})^m$.

Unlike LP, convex QP does not in general admit a dual formulation with the same strong structural properties. A Wolfe-type dual formulation for nonlinear programming was introduced in \citep{wolfe1961duality}. In this work, we adopt the restricted Wolfe dual form proposed in \citep{li2018qsdpnal}, given by
\begin{equation}
\begin{aligned}
    \max_{x,\,y,\,r} \quad & -p(-r; l_v, u_v) - \frac{1}{2} x^\top Q x - p(y; l_c, u_c) \\
    \text{s.t.} \quad & r = Qx + A^\top y + c,\quad y \in \mathcal{Y},\quad r \in \mathcal{R}, \label{eq:dualQP}
\end{aligned}
\end{equation}
where
\[
p(z; l, u) := u^\top z^+ - l^\top z^-, \qquad
z^+ = \max\{z,0\}, \quad z^- = \max\{-z,0\}.
\]
The sets $\mathcal{Y}$ and $\mathcal{R}$ encode sign restrictions induced by the bound constraints:
\[
\mathcal{Y}_i =
\begin{cases}
\{0\} & (l_c)_i = -\infty,\ (u_c)_i =\infty,\\
\mathbb{R}^+, & (l_c)_i = -\infty,\ (u_c)_i \in \mathbb{R},\\
\mathbb{R}^-, & (l_c)_i \in \mathbb{R},\ (u_c)_i = +\infty,\\
\mathbb{R},   & (l_c)_i \in \mathbb{R},\ (u_c)_i \in \mathbb{R},
\end{cases}
\qquad
\mathcal{R}_i =
\begin{cases}
\{0\} & (l_v)_i = -\infty,\ (u_v)_i =\infty,\\
\mathbb{R}^-, & (l_v)_i = -\infty,\ (u_v)_i \in \mathbb{R},\\
\mathbb{R}^+, & (l_v)_i \in \mathbb{R},\ (u_v)_i = +\infty,\\
\mathbb{R},   & (l_v)_i \in \mathbb{R},\ (u_v)_i \in \mathbb{R}.
\end{cases}
\]
Equivalently, the primal problem \eqref{eq:QP} can be expressed in saddle-point form as
\begin{equation}\label{eq:minimax}
    \min_{x \in \mathcal{X}} \max_{y \in \mathcal{Y}}
    \quad
    \frac{1}{2} x^\top Q x + c^\top x + y^\top A x - p(y; l_c, u_c),
\end{equation}
which provides the foundation for the primal--dual algorithm developed in the following sections.

\subsection{The PDHG Iteration Scheme}
PDHG solves the associated saddle-point problem \eqref{eq:minimax} via alternating primal descent and dual ascent steps. Specifically, given the current iterate $(x^k, y^k)$, the PDHG updates are defined by
\begin{subequations}\label{eq:pdhcg_update}
\begin{align}
    x^{k+1} &= \arg\min_{x \in \mathcal{X}} \left\{ \frac{1}{2}x^\top Q x + c^\top x + x^\top A^\top y^k + \frac{1}{2\tau_k} \|x - x^k\|_2^2 \right\}, \label{eq:primal_update} \\
    \bar{x}^{k+1} &= 2x^{k+1} - x^k, \label{eq:extrapolation} \\
    y^{k+1} &= y^k + \sigma_k A \bar{x}^{k+1} - \sigma_k \cdot \text{proj}_{\mathcal{S}}\!\left( \frac{1}{\sigma_k} y^k + A \bar{x}^{k+1} \right), \label{eq:dual_update}
\end{align}
\end{subequations}
where $\tau_k$ and $\sigma_k$ denote the primal and dual stepsizes, respectively. The extrapolated variable $\bar{x}^{k+1}$ corresponds to the standard over-relaxation step in PDHG-type methods. The dual update \eqref{eq:dual_update} can be interpreted as a projected gradient ascent step with respect to the extrapolated primal iterate.

\subsection{Primal Subproblem Solvers} \label{sec:inner_solvers}

The overall efficiency of \name critically depends on the rapid solution of the primal subproblem \eqref{eq:primal_update}, which is a strictly convex quadratic program subject to box constraints over $\mathcal{X}$. Depending on the structure of the quadratic matrix $Q$ and the presence of bound constraints, we adopt different solution strategies:

\begin{itemize}
    \item \textbf{Diagonal Hessian.} When $Q$ is diagonal, the primal subproblem admits a closed-form solution obtained via componentwise operations.

    \item \textbf{General Case.} For the fully constrained setting, we apply a projected gradient method equipped with Barzilai--Borwein (BB) step sizes.
\end{itemize}

\paragraph{Structured Quadratic Objective.}
A key feature of \name is its ability to efficiently handle structured quadratic objectives. Specifically, we allow the matrix $Q$ to admit a decomposition of the form
\begin{equation}
    Q = P + R^\top R,
\end{equation}
where $P$ is sparse and $R \in \mathbb{R}^{k \times n}$ (with $k \ll n$) represents a low-rank factor. Such decompositions commonly arise in applications including data science and finance, for example in portfolio optimization with factor models.

Rather than forming the dense matrix $R^\top R$ explicitly, our implementation performs matrix–vector products with $P$ and $R$ separately, yielding a matrix-free realization of the quadratic term. This design substantially reduces memory usage and avoids unnecessary dense computations, thereby enabling the efficient solution of large-scale QP instances that would otherwise be infeasible on memory-limited hardware.

%% file: text/enhance.tex
\section{Algorithmic Enhancements}

In this section, we describe several algorithmic enhancements incorporated into \name to improve efficiency and convergence behavior beyond the baseline PDHCG framework.

\subsection{Reflected Halpern Scheme}

The Halpern scheme is an acceleration technique for PDHG-type methods that improves convergence behavior by introducing an explicit anchoring to the initial iterate. It has recently been shown to be effective for large-scale linear programming, offering both strong theoretical guarantees and encouraging empirical performance \citep{lu2025cupdlpx}.

Halpern PDHG constructs each new iterate as a weighted combination of the current PDHG update and the initial point. In particular, letting $z=(x,y)$ denote the primal--dual variable, the reflected Halpern update at iteration $k$ is given by
\begin{equation}
    z^{k+1}
    =
    (1+\theta)
    \left(
    \frac{k+1}{k+2}\,\text{PDHG}(z^k)
    +
    \frac{1}{k+2}\,z^0
    \right)
    -
    \theta z^{k-1},
\end{equation}
where $\text{PDHG}(z^k)$ denotes a single PDHG step applied to the current iterate $(x^k,y^k)$ as defined in \eqref{eq:pdhcg_update}, and $\theta$ is a reflection parameter.

As discussed in \citep{lu2024restarted}, the Halpern scheme shares structural similarities with averaged and restarted variants of PDHG, but differs in its anchoring mechanism and theoretical convergence properties. In \name, we adopt this reflected Halpern formulation to accelerate convergence while preserving the stability of the underlying primal--dual updates.

\subsection{Adaptive Primal--Dual Weight}

Following effective practices in modern first-order solvers \citep{lu2025cupdlpx}, we reparameterize the stepsizes $\tau_k$ and $\sigma_k$ in terms of a global stepsize $\eta_k$ and a primal--dual weight $\omega_k$:
\begin{equation}
    \tau_k = \frac{\eta_k}{\omega_k}, \qquad \sigma_k = \eta_k \omega_k.
\end{equation}
Here, $\eta_k$ governs the overall convergence speed, while $\omega_k$ controls the relative scaling between the primal and dual updates. This decomposition enables independent adaptivity of the two parameters. In our implementation, we fix $\eta_k = 0.998/\|A\|$ and dynamically adjust $\omega_k$ using a proportional--integral--derivative (PID) controller to maintain primal--dual balance, which has been applied in first-order linear programming solvers~\citep{lu2025cupdlpx}.

Specifically, we define the imbalance measure of $n$-th restarted round as
\begin{equation}
    e_n = \log \left( 
    \frac{\sqrt{\omega_n}\,\| x^{n,t} - x^{n,0} \|_2}
    {\frac{1}{\sqrt{\omega_n}}\,\| y^{n,t} - y^{n,0} \|_2}
    \right),
\end{equation}
where $(x^{n,0}, y^{n,0})$ and $(x^{n,t}, y^{n,t})$ denote the first and last iterate of $n$-th restarted round, and update the primal--dual weight according to
\begin{equation}\label{eq:pid}
    \log \omega_{n+1} = \log \omega_n -
    \Bigl[ K_P e_n + K_I \sum_{i=1}^n e_i + K_D (e_n - e_{n-1}) \Bigr],
\end{equation}
where $K_P$, $K_I$, and $K_D$ denote the proportional, integral, and derivative gains, respectively.

\subsection{Adaptive Stopping Criterion for Primal Subproblems}

In \name, the primal update associated with the quadratic objective is solved inexactly using either the conjugate gradient (CG) or Barzilai--Borwein (BB) method. Employing a fixed termination tolerance for this inner solver is often inefficient: a loose tolerance may hinder convergence, while an overly stringent tolerance leads to unnecessary computational cost, especially during the early iterations. To address this issue, we introduce a dynamic, heuristic stopping criterion based on the progress of the outer iterations.

Rather than relying on strict theoretical energy bounds, we dynamically scale the inner tolerance based on the primal movement. To ensure numerical stability and consistent convergence, we explicitly enforce a monotonically decreasing tolerance across iterations. Specifically, the allowable residual norm threshold $\epsilon_k$ for the inner solver at iteration $k$ is updated according to the following practical rule:
\begin{equation}
    \epsilon_k = \min \left( \epsilon_{k-1}, \max \left( \gamma \frac{\omega \|x_k - x_{k-1}\|}{\tau}, \epsilon_{\min} \right) \right),
\end{equation}
where $\|x_k - x_{k-1}\|$ represents the primal movement, $\tau$ is the step size, $\omega$ is the primal weight, and $\gamma$ is a constant scaling factor. In our implementation, we set $\gamma = 5 \times 10^{-4}$ and impose a lower bound $\epsilon_{\min} = 10^{-9}$ to prevent numerical underflow. 

This heuristic ensures that the inner subproblem is solved with increasing precision only as the outer sequence converges (i.e., as the primal movement diminishes), while the monotonic $\min$ operator prevents the tolerance from inadvertently relaxing if the step difference temporarily spikes. Because all required components are readily available within the main algorithmic loop, this adaptive criterion is highly lightweight and introduces no additional computational overhead.

%% file: text/kkt.tex
\section{Optimality Conditions and Stopping Criteria}

In this section, we derive the optimality conditions for the convex QP problem \eqref{eq:QP} and define the specific residual metrics used to monitor convergence and terminate the solver.

\subsection{KKT Conditions}
The optimality conditions for the saddle-point problem \eqref{eq:minimax} can be characterized using variational inequalities. A pair $(x^\star, y^\star) \in \mathcal{X} \times \mathcal{Y}$ is optimal if and only if it satisfies:
\begin{subequations}
\begin{align}
    \text{Primal Feasibility:} \quad & Ax^\star \in \mathcal{S}, \\
    \text{Dual Feasibility:} \quad & r^\star =Q x^\star + c + A^\top y^\star \in \mathcal{R}, \\
    \text{Duality Gap:} \quad & (x^\star)^\top Qx^\star + c^\top x^\star +p(-r^\star; l_v, u_v) + p(y^\star; l_c, u_c) = 0.
\end{align}
\end{subequations}

To numerically measure the violation of these conditions, we adopt the \textit{natural residual} formulation common in proximal gradient methods. Specifically, the violation of the stationarity condition is measured by the distance between the current iterate $x$ and the result of a proximal gradient step.

\subsection{Termination Criteria}
\label{subsection: termination}
We define the normalized primal residual ($r_{\text{primal}}$), dual residual ($r_{\text{dual}}$), and duality gap ($r_{\text{gap}}$) based on the $L_\infty$-norm, which provides a stricter convergence criterion than the $L_2$-norm typically used in theoretical analysis. The solver terminates when the maximum of these residuals falls below a user-specified tolerance $\epsilon_{\text{tol}}$ (e.g., $10^{-4}$ or $10^{-6}$):
\begin{equation}
    \max \{ r_{\text{primal}}, r_{\text{dual}}, r_{\text{gap}} \} \le \epsilon_{\text{tol}}.
\end{equation}
The specific formulations for the residuals are given below.

\paragraph{Primal Residual}
The primal residual measures the violation of the linear constraints $Ax \in \mathcal{S}$ relative to the scale of the constraint bounds:
\begin{equation}
    r_{\text{primal}} = \frac{\left\|Ax-\operatorname{proj}_{\mathcal{S}}(Ax)\right\|_\infty}{1+\max \left\{\left\|l_{c}\right\|_\infty,\left\|u_{c}\right\|_\infty\right\}}.
\end{equation}

\paragraph{Dual Residual}
The dual residual quantifies the violation of the dual feasibility $r =Q x + c + A^\top y \in \mathcal{R}$. The normalized dual residual is then defined as the magnitude of the violation, scaled by the problem data:
\begin{equation}
    r_{\text{dual}} = \frac{\left\|r -  \text{proj}_{ \mathcal{R}} \left(r \right) \ \right\|_\infty}{1 + \max\{\|Qx\|_\infty, \|A^\top y\|_\infty, \|c\|_\infty \}}.
\end{equation}

\paragraph{Duality Gap}
The duality gap provides a comprehensive measure of optimality, combining primal and dual objective values. We define the normalized gap as:
\begin{equation}
    r_{\text{gap}}=\frac{\left|x^\top Qx + c^\top x +p(-r; l_v, u_v) + p(y; l_c, u_c)\right|}{1+\max\left\{\left|\frac{1}{2}x^\top Qx+c^\top x\right|,\left| \frac{1}{2} x^\top Qx  +p(-r; l_v, u_v) + p(y; l_c, u_c) \right|\right\}}.
\end{equation}

%% file: text/infeasibility.tex
\section{Infeasibility Detection}

In addition to solving for optimal primal-dual pairs, \name can also detect when the problem is infeasible or unbounded. This corresponds to identifying certificates of \textit{primal infeasibility} or \textit{dual infeasibility} (unboundedness). By the theorem of alternatives, the non-existence of a primal (dual) feasible solution is certified by the existence of a \textit{ray} such that the auxiliary dual (primal) objective tends to infinity while satisfying constraints. Inspired by recently study on the behavior of FOM in infeasible cases, such as ALM \citep{andrews2025augmented} and PDLP \citep{applegate2024infeasibility}, one could extract the Farkas ray from the update iterations. In our implementation, candidate certificates rays are extracted from the difference between consecutive iterates of the solver, denoted as $\Delta y$ and $\Delta x$.

\subsection{Primal Infeasibility}

Primal infeasibility means the following LP problem is infeasible:
$$
\min_{x \in \mathcal{X}} \; 0 \quad \text{s.t.} \; Ax \in \mathcal{S},
$$
which corresponds the case $Q =0$ and $c = 0$ in \eqref{eq:QP}. According to \eqref{eq:dualQP}, its dual problem is given by
$$
\max_{y \in \mathcal{Y}} \; -p(-A^\top y; l_v, u_v) - p(y; l_c, u_c) \quad \text{s.t.} \; A^\top y \in \mathcal{R}.
$$
So the infeasibility of primal problem means there exists a solution to the following system
$$
y \in \mathcal{Y}, \quad A^\top y \in \mathcal{R}, \quad -p(-A^\top y; l_v, u_v) - p(y; l_c, u_c) > 0.
$$

In \name, we construct a candidate dual ray $\tilde{y}$ by projecting the iterate difference $\Delta y$ onto the recession cone of the dual variables, that is
$$
\tilde{y} = \text{proj}_{\mathcal{Y}} (\Delta y).
$$
The solver terminates with status \texttt{PRIMAL\_INFEASIBLE} if the ray yields a positive improving direction for the objective $-p(-A^\top y; l_v, u_v) - p(y; l_c, u_c)$ and the violation of the dual constraints is negligible. In practice, the termination criteria is set as
\begin{equation}
   \|A^\top  \tilde{y} - \text{proj}_{\mathcal{R}} (A^\top  \tilde{y}) \|_\infty \le \epsilon_{\text{inf}} \cdot \left[p(-A^\top \tilde{y}; l_v, u_v) + p(\tilde{y}; l_c, u_c)\right]^-
\end{equation}
for some small enough $\epsilon_{\text{inf}} > 0$.

\subsection{Dual Infeasibility}

A problem is dual infeasible (or primal unbounded) if there exists a direction $d$ along which the objective function tends to $-\infty$ while remaining feasible. For a generic Convex QP, a certificate of unboundedness must satisfy:
\begin{subequations}\label{eq:dualinfea}
\begin{align}
d \in \text{Recc}(\mathcal{X}) \quad & \text{(Region Feasibility)}\\
    A d \in \text{Recc}(\mathcal{S}) \quad & \text{(Constraint Feasibility)} \\
    c^\top  d < 0 \quad & \text{(Descent Direction)} \\
    Q d = 0 \quad & \text{(Vanishing Curvature)}
\end{align}
\end{subequations}
where $\text{Recc}(\mathcal{Z})$ represents the recession cone of any convex set $\mathcal{Z}$, which is defined as
$$
\text{Recc}(\mathcal{Z}) \coloneqq \left\{ z \in \mathbb{R}^m \mid \forall s \in \mathcal{S}, \, \forall \lambda >0, s+ \lambda z \in \mathcal{Z}\right\}.
$$

\begin{theorem}
The convex quadratic program \eqref{eq:QP} is unbounded if and only if \eqref{eq:dualinfea} holds for some $d \in \mathbb{R}^n$.
\end{theorem}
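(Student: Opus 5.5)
Both directions are proved on the polyhedron $F:=\{x\in\mathcal X : Ax\in\mathcal S\}$; throughout, "unbounded" means that $F\neq\emptyset$ and $f(x):=\tfrac12 x^\top Qx+c^\top x$ is unbounded below on $F$. This feasibility assumption is needed for the theorem to hold: if $F=\emptyset$, \eqref{eq:dualinfea} may still admit a solution. Since $\mathcal X$ and $\mathcal S$ are boxes, $F$ is a polyhedron. Its recession cone is $\mathrm{Recc}(F)=\{d\in\mathrm{Recc}(\mathcal X): Ad\in\mathrm{Recc}(\mathcal S)\}$, where $\mathrm{Recc}$ of a box is the product of $\{0\}$, $\mathbb R_+$, $\mathbb R_-$ or $\mathbb R$ according to which bounds are finite.

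\emph{Sufficiency.} Suppose $d$ satisfies \eqref{eq:dualinfea} and fix $x_0\in F$. The first two conditions give $x_0+\lambda d\in F$ for all $\lambda\ge0$. Expanding the objective,
\[
f(x_0+\lambda d)=f(x_0)+\lambda(Qx_0+c)^\top d+\tfrac{\lambda^2}{2}d^\top Qd=f(x_0)+\lambda\, c^\top d ,
\]
because $Qd=0$ kills both $x_0^\top Qd$ and $d^\top Qd$. Since $c^\top d<0$, this tends to $-\infty$ as $\lambda\to\infty$.

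\emph{Necessity.} This is the main step. I will use the Frank--Wolfe theorem: a quadratic function bounded below on a nonempty polyhedron attains its infimum. So if $f$ is unbounded below on $F$, the minimization is not attained. I would argue the contrapositive: assume no $d$ satisfies \eqref{eq:dualinfea}, and show $f$ is bounded below on $F$.

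Take the Minkowski--Weyl decomposition $F=\mathrm{conv}(V)+\mathrm{Recc}(F)$ with $V$ finite. Any $x\in F$ can then be written $x=v+d$ with $v\in\mathrm{conv}(V)$ and $d\in\mathrm{Recc}(F)$, and
\[
f(v+d)=f(v)+(Qv+c)^\top d+\tfrac12 d^\top Qd .
\]
Split $\mathrm{Recc}(F)$ into two parts.
\begin{itemize}
\item On $K_0:=\mathrm{Recc}(F)\cap\ker Q$, the absence of a certificate gives $c^\top d\ge0$, and also $(Qv)^\top d=v^\top Qd=0$.
\item On the rest of the cone, $d^\top Qd>0$ dominates the linear term.
\end{itemize}
Making the second point uniform over the whole cone is the genuine obstacle. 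I expect a direct compactness argument to be delicate: the quadratic term can degenerate near $K_0$ while $c^\top d$ is negative there.

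So I would avoid it and use a cleaner route via LP duality applied to the first-order conditions. If the QP has no minimizer, then the KKT system
\[
Qx+c+A^\top y\in -N_{\mathcal X}(x),\qquad Ax\in\mathcal S
\]
has no solution. The plan is to exploit the fact that $Q\succeq0$ lets one write $Q=L^\top L$.

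Alternatively, one can argue directly by induction (the standard Frank--Wolfe/Eaves argument). Take a sequence $x_k\in F$ with $f(x_k)\to-\infty$. Then $\|x_k\|\to\infty$; normalize $d_k=x_k/\|x_k\|\to d$. The limit satisfies $d\in\mathrm{Recc}(F)$ by closedness of the polyhedral description. Dividing $f(x_k)\le f(x_0)$ by $\|x_k\|^2$ yields $d^\top Qd\le0$, hence $Qd=0$ by positive semidefiniteness.

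If $c^\top d<0$ we are done. Otherwise, I would restrict to the face of $F$ and the subspace orthogonal to $d$ and apply induction on dimension: project $x_k$ to $x_k-t_kd$, keeping it feasible using the polyhedral structure, with $f$ unchanged up to the term $-t_kc^\top d\le0$. This recursive reduction is the step I expect to require the most care, and I would follow Eaves' proof of the Frank--Wolfe theorem adapted to convex $Q$.
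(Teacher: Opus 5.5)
Your sufficiency argument is correct and matches the paper's, and you are right that $F\neq\emptyset$ must be assumed (the paper assumes it implicitly by fixing a feasible point). The gap is in the necessity direction: none of your three routes is carried through, so that half is unproved.

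\begin{itemize}
\item \emph{Minkowski--Weyl route.} It stops at exactly the uniformity problem you identify.
\item \emph{``LP duality on the KKT system'' route.} This is only a plan. That system, involving the normal cone $N_{\mathcal X}(x)$, is a complementarity system rather than a linear one, so LP duality does not apply to it as stated.
\item \emph{Normalized-sequence route.} It stops at the case $c^\top d=0$. This route is the paper's own, and the paper simply asserts $c^\top d<0$ from $f(x_k)\to-\infty$. You are right to distrust that step, because the limit only gives $c^\top d\le 0$. For instance, take $n=2$, $Q=0$, no finite bounds, $c=(-1,0)^\top$ and $x_k=(k,k^2)^\top$. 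Then $f(x_k)=-k\to-\infty$, but $x_k/\|x_k\|\to d=(0,1)^\top$ with $c^\top d=0$, even though the certificate $(1,0)^\top$ exists.
\end{itemize}

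Your proposed repair of the last route, replacing $x_k$ by $x_k-t_kd$, is not justified as stated. The direction $-d$ need not be a recession direction of $F$, so these points may leave $F$. Handling this is exactly the work done in Eaves' induction, and you do not supply it.

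A short way to close the gap keeps your factorization $Q=L^\top L$ but applies LP duality to a slice of $F$ rather than to the KKT system.
\begin{itemize}
\item \emph{Setup.} Write $F=\{x: Gx\le h\}$, with rows for the finite bounds only, so that $\mathrm{Recc}(F)=\{d: Gd\le 0\}$. Fix $x_0\in F$ and consider the feasible LP $\min\{c^\top x : Gx\le h,\ Lx=Lx_0\}$.
\item \emph{Unbounded case.} The dual is infeasible, so Farkas' lemma gives $d$ with $Gd\le 0$, $Ld=0$ and $c^\top d<0$. Since $Ld=0\iff Qd=0$, this $d$ satisfies \eqref{eq:dualinfea}.
\item \emph{Bounded case.} Strong duality gives $\mu\ge 0$ and $\nu$ with $c+G^\top\mu-L^\top\nu=0$. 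This condition does not involve the right-hand side, so for every $x\in F$ we get $c^\top x=-\mu^\top Gx+\nu^\top Lx\ge \nu^\top Lx-\mu^\top h$. Hence $f(x)\ge \tfrac12\|Lx\|^2+\nu^\top Lx-\mu^\top h\ge -\tfrac12\|\nu\|^2-\mu^\top h$, so $f$ is bounded below on $F$.
\end{itemize}
This proves necessity directly. Frank--Wolfe is not needed; as you invoke it, it only restates that an unbounded problem has no minimizer.
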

\begin{proof}
($\Leftarrow$)
Let $x_0$ be an arbitrary feasible point and suppose $d$ satisfies \eqref{eq:dualinfea}.
Since $d \in \text{Recc}(\mathcal{X})$ and $Ad \in \text{Recc}(\mathcal{S})$, we know that $x_0+\lambda d \in \mathcal{X}$ and $A(x_0+\lambda d)\in \mathcal{S}$, so $x_0+\lambda d$ is also feasible for all $\lambda\ge0$.
Moreover, $Qd=0$ and $c^\top d < 0$ imply that the objective value 
\[
f(x_0+ \lambda d)
=
f(x_0)+\lambda \,c^\top d \to -\infty
\quad (\lambda \to\infty)
\]
which proves unboundedness.

($\Rightarrow$)
Assume \eqref{eq:QP} is unbounded.
Then there exists a feasible sequence $\{x_k\}$ with $f(x_k)\to-\infty$.
For any feasible point $\bar x$, let
$x_k=\bar x+t_k d_k$, where $t_k=\|x_k-\bar x\|\to\infty$ and $\|d_k\|=1$.
By compactness, take a subsequence with $d_k\to d\neq0$.

From feasibility, we claim that $d \in \text{Recc}(\mathcal{X})$ because of the following. For any $\lambda \geq 0$, there exists some $k_0$ such that $t_k \geq \lambda$ for all $k \geq k_0$, then the fact that $\bar{x} \in \mathcal{X}$ and $\bar{x} + t_k d_k \in \mathcal{X}$ implies that 
\[
\bar{x} + \lambda d_k \in \mathcal{X}
\]
holds for all $k \geq k_0$. By taking $k \rightarrow \infty$, we know that $\bar{x} + \lambda d\in \mathcal{X}$ for all $\lambda \geq 0$ and all $\bar{x} \in \mathcal{X}$, which completes the proof of the claim. Similarly, we could show that $Ad \in \text{Recc}(\mathcal{S})$. 

Expanding the objective,
\[
f(\bar x+t_k d_k)
=
\tfrac12 t_k^2 d_k^\top Q d_k
+ t_k(d_k^\top Q\bar x+c^\top d_k)
+ O(1).
\]
Dividing by $t_k^2$ and letting $k\to\infty$ yields
$d^\top Q d=0$, hence the fact that $Q \succeq 0$ implies that $Qd=0$.
Since the quadratic term vanishes and $f(x_k)\to-\infty$, the linear term must be strictly negative, giving $c^\top d<0$.
\end{proof}

In our implementation, the candidate primal ray $\tilde{d}$ is obtained by normalizing the projected iterate difference $\Delta x$, i.e., $ \tilde{d} = \Delta x / \| \Delta x \|_\infty$. The solver monitors a combined residual that accounts for both linear constraint violation and non-zero quadratic curvature. The problem is declared \texttt{DUAL\_INFEASIBLE} if
\begin{equation}
    c^\top  \tilde{d} < -\epsilon_{\text{tol}} \quad \text{and} \quad  \max \left\{ \|\tilde{d} - \text{proj}_{\text{Recc}(\mathcal{X})}( \tilde{d}) \|_\infty, \; \| A \tilde{d} - \text{proj}_{\text{Recc}(\mathcal{S})}(A \tilde{d}) \|_\infty, \; \frac{\| Q \tilde{d} \|_\infty}{\gamma} \right\} \le \epsilon_{\text{inf}}.
\end{equation}
Here, $\gamma$ is a system scaling factor.

%% file: text/exp.tex
\section{Numerical Experiment}
In this section, we present numerical experiments to evaluate the performance of the proposed PDHCG-II algorithm. We start by describing the implementation details and experimental setup, including the computing environment and termination criteria. We then report the computational results on various datasets, comparing PDHCG-II against state-of-the-art QP solvers to demonstrate its efficiency and robustness.
\subsection{Implementation Details}\label{sec:implementation}

\paragraph{Solver Implementation} We implement \name in both Julia and C to balance research flexibility with production efficiency. \textbf{\name.jl} is a Julia package leveraging \texttt{CUDA.jl} and just-in-time compilation, designed for rapid prototyping while maintaining competitive performance. Complementing this, \textbf{\name-C} is a highly optimized C implementation using the direct CUDA C API, intended for production environments where minimizing overhead and integration into larger systems are critical.

\paragraph{Computing Environment} Experiments are conducted on a server with an NVIDIA H100 GPU (80GB HBM3), an Intel Xeon Platinum 8469C CPU (2.60 GHz), and 512 GB RAM.

\paragraph{Termination Condition} In all experiments, we enforce the strict termination criteria defined in Section~\ref{subsection: termination}. The solver terminates successfully only when the relative primal residual ($r_{\text{primal}}$), dual residual ($r_{\text{dual}}$), and duality gap ($r_{\text{gap}}$) all drop below the specified tolerance threshold $\epsilon$. Formally, the convergence condition is met when:
\begin{equation}
    \max(r_{\text{primal}}, r_{\text{dual}}, r_{\text{gap}}) \le \epsilon,
\end{equation}
where the detailed definitions of these $L_\infty$-norm based residuals are provided in Section~\ref{subsection: termination}.

\subsection{Benchmark QP Datasets}
We evaluate the performance of PDHCG-II on standard benchmark repositories:
\begin{enumerate}
    \item \textbf{Maros–Mészáros Benchmark} \cite{maros1999repository}: A classic collection containing \textbf{134 instances} of convex quadratic programming problems. For this dataset, we conduct all experiments with a standard termination tolerance of $\epsilon = 10^{-6}$.
    \item \textbf{Mittelmann's Benchmark} \cite{mittelmann2021decision}:There are 21 instances from the test set maintained by Hans Mittelmann, tailored for benchmarking continuous convex qp solvers. On this benchmark, we evaluate solvers at both $\epsilon = 10^{-6}$ and $\epsilon = 10^{-8}$ precision levels. It is important to note that for the high-precision experiments ($\epsilon = 10^{-8}$), we specifically compare PDHCG-II against HPR-QP, which demonstrated the second-best performance under $\epsilon = 10^{-6}$, while PDQP and the original PDHCG are evaluated only at $\epsilon = 10^{-6}$.
\end{enumerate}
\paragraph{Evaluation Metrics} 
We evaluate solver performance based on the number of solved instances, success rate, arithmetic mean time, and Shifted Geometric Mean (SGM) with a shift of 10. To strictly penalize failures, if an instance does not converge within the specified time limit or reaches the maximum iteration count, its runtime $t_i$ is recorded as the time limit itself (e.g., $1000s$ or $3600s$). The $\text{SGM}{10}$ is then defined as:
\begin{equation}
    \text{SGM}_{10} = \exp\left(\frac{1}{n} \sum_{i=1}^{n} \ln(t_i + 10)\right) - 10,
\end{equation}
where $t_i$ represents the runtime for instance $i$. Unlike the arithmetic mean, the SGM reduces the influence of extreme outliers, providing a more balanced comparison of solver efficiency across problems of varying difficulty.

\begin{table}[H]
  \centering
  \caption{Performance Comparison on 134 Maros-Mészáros Instances with 1000s timelimit and tolerance $\epsilon=10^{-6}$. The best results are marked in \textbf{bold}, and the second best are \underline{underlined}.}
  \medskip
  
  \label{tab:comparison_134}
  \begin{tabular}{lcccc} 
    \toprule
    \textbf{Solver} & \textbf{Total} & \textbf{Solved} & \textbf{SGM10 (s)} & \textbf{Average (s)} \\
    \midrule
    PDQP & \multirow{5}{*}{134} & 118 & 28.53 & 160.70 \\
    PDHCG & & 111 & 33.51 & 210.53 \\
    HPR-QP & & 124 & \textbf{10.56} & \underline{93.32} \\
    PDHCG-II.jl & & \textbf{126} & 17.31 & 105.91 \\
    PDHCG-II-C & & \underline{125} & \underline{12.33} & \textbf{92.79} \\
    \bottomrule
  \end{tabular}
\end{table}

\paragraph{Results} 
Table \ref{tab:comparison_134} shows that PDHCG-II demonstrates superior robustness and efficiency on the Maros-Mészáros benchmark. The Julia and C implementations solve 126 and 125 instances respectively, surpassing HPR-QP (124) and significantly outperforming the original PDHCG (111). PDHCG-II-C also achieves the lowest arithmetic mean runtime of 92.79s. This performance advantage is further amplified on the Mittelmann benchmark (Table \ref{tab:solver_comparison_21}), where PDHCG-II-C solves \textbf{17 out of 21} instances compared to 14 for HPR-QP, while reducing the $\text{SGM}_{10}$ by approximately 43\% (72.74s vs. 126.88s). Notably, PDHCG-II exhibits exceptional stability at high precision; when the tolerance is tightened to $\epsilon=10^{-8}$, it maintains its success rate of 17 solved instances with only a moderate increase in runtime, confirming that the proposed algorithmic enhancements effectively prevent the convergence tail often observed in first-order methods.

\begin{table}[H]
  \centering
  \caption{Performance Comparison on 21 Mittelmann Instances with 3600s timelimit. The best results are marked in \textbf{bold}, and the second best are \underline{underlined}.}
  \label{tab:solver_comparison_21}
  \medskip
  
  \begin{tabular}{lcccc} 
    \toprule
    \textbf{Solver} & \textbf{Total} & \textbf{Solved} & \textbf{SGM10 (s)} & \textbf{Average (s)} \\
    \midrule
    \multicolumn{5}{c}{\textit{Tolerance $\epsilon = 10^{-6}$}} \\ 
    \midrule
    PDQP & \multirow{5}{*}{21} & 11 & 410.34 & 1942.89 \\
    PDHCG & & 10 & 486.81 & 1987.11 \\
    HPR-QP & & 14 & 126.88 & 1282.34 \\
    PDHCG-II.jl & & \underline{16} & \underline{103.61} & \underline{959.35} \\
    PDHCG-II-C & & \textbf{17} & \textbf{72.74} & \textbf{766.94} \\
    \midrule
    \multicolumn{5}{c}{\textit{Tolerance $\epsilon = 10^{-8}$}} \\ 
    \midrule
    HPR-QP & \multirow{3}{*}{21} & 14 & 137.42 & 1318.61 \\
    PDHCG-II.jl & & \underline{16} & \underline{134.29} & \underline{994.05} \\
    PDHCG-II-C & & \textbf{17} & \textbf{91.22} & \textbf{894.32} \\
    \bottomrule
  \end{tabular}
\end{table}

\subsection{Effectiveness of Adaptive Inner Tolerance}

To evaluate the impact of the heuristic adaptive stopping criterion derived in Section 3, we conducted an ablation study on the Maros-Mészáros dataset. Since the inner solver is only invoked when the quadratic matrix $Q$ is non-diagonal, we filtered the dataset to exclude problems with diagonal, resulting in a subset of instances where the efficiency of the inner solver is critical. Both variants were tested with a global termination tolerance of $\epsilon_{\text{tol}} = 10^{-6}$ and a time limit of 1000 seconds per instance. The performance comparison is summarized in Table~\ref{tab:adaptive_inner}. The metrics reported include the number of solved instances, and the Shifted Geometric Mean (SGM) and Arithmetic Mean of both runtime and total iterations.

\begin{table}[H]
    \centering
    \caption{Comparison of Adaptive Inner Tolerance Strategies on Non-Diagonal Maros-Mészáros Problems ($\epsilon=10^{-6}$, Time Limit = 1000s).}
    \label{tab:adaptive_inner}
    \medskip
    
    \begin{tabular}{lccccc}
        \toprule
        & \textbf{Solved} & \multicolumn{2}{c}{\textbf{Time (s)}} & \multicolumn{2}{c}{\textbf{Total Iterations}} \\
        \cmidrule(lr){3-4} \cmidrule(lr){5-6}
        Solver & (Count) & SGM10 & Average & SGM10 & Average \\
        \midrule
        PDHCG (Old version) & 76 & 11.04 & 51.94 & 18,434 & 472,388 \\
        \name & \textbf{78} & \textbf{8.17} & \textbf{27.79} & \textbf{8,231} & \textbf{111,689} \\
        \bottomrule
    \end{tabular}
\end{table}

The proposed adaptive strategy significantly outperforms the original heuristic used in the first-generation PDHCG. It solves 2 additional hard instances and delivers a substantial speedup: SGM runtime is reduced by \textbf{26\%} (11.04s to 8.17s) and mean runtime is nearly halved. Iteration complexity sees an even greater reduction, with SGM iterations dropping by over 50\% and the mean decreasing by a factor of 4. These results confirm that the new criterion, derived from energy dissipation analysis, corrects the conservative nature of the original heuristic. By achieving a tighter balance between inner-loop effort and outer-loop convergence, it prevents unnecessary computations without compromising robustness.

\subsection{Real-world Large-scale QP Problems}
\label{sec:realworldqp}

In this section, to further evaluate the capability of our algorithm in solving large-scale problems, we consider the Lasso problem, with data sourced from real-world datasets, specifically LIBSVM \citep{chang2011libsvm} and the SuiteSparse Matrix Collection \citep{davis2011university}. All selected problems contain more than 6,000,000 nonzero elements, with the largest problem containing nearly 400,000,000 nonzeros. 

The formulation of the Lasso problem is given by:
\begin{equation}
  \min_x \quad \| A x - b \|_2^2 + \lambda \| x \|_1,
\end{equation}
which is equivalent to the following QP problem when considering its dual form:
\begin{equation}
\begin{array}{ll}
\min & y^T y + \lambda \mathbf{1}^T t \\
\text { s.t. } & y = A x - b \\
& -t \leq x \leq t.
\end{array}
\end{equation}
In the experiments, we set the last column of matrix $A$ as vector $b$, and we set the parameter $\lambda = 0.01 \|A^T b\|_{\infty}$ for all test cases. We set $\epsilon = 10^{-6}$ for all instances, which is used in the experiment of PDHCG~\citep{pdhcg}.
\begin{table}[htbp]

  \centering
  \renewcommand{\arraystretch}{1.5} 
  \caption{Solving time (s) over large-scale LASSO problems in LIBSVM and SuiteSparse Matrix Collection. The best results are marked in \textbf{bold}, and the second best are \underline{underlined}. "t" denotes time limit exceeded ($>7200$s).}
  \medskip
  
  \resizebox{\textwidth}{!}{
  \begin{threeparttable}
    \begin{tabular}{c|ccc|ccccccc}
    \toprule
    Problem & $m$ & $n$ & Sparsity & \textbf{PDHCG-II-C} & HPR-QP & PDHCG & PDQP & SCS(GPU) & OSQP & COPT \\
    \hline
    SLS & 1,748,122 & 62,729 & 6.21E-05 & \textbf{1.96} & \underline{2.09} & 3.35 & 7.30 & 345.09 & 80.32 & 88.21 \\
    rcv1\_test & 677,399 & 47,236 & 1.55E-03 & \textbf{2.37} & \underline{6.21} & 7.12 & 19.54 & t & t & t \\
    avazu-site.tr & 23,567,843 & 1,000,000 & 1.50E-05 & \textbf{1337.05} & 4911.41 & \underline{1377.54} & 5124.82 & t & t & t \\
    avazu-app & 40,428,967 & 1,000,000 & 1.50E-05 & \textbf{217.70} & \underline{753.65} & 1429.55 & 5557.97 & t & t & t \\
    avazu-site & 25,832,830 & 1,000,000 & 1.50E-05 & \textbf{1642.69} & \underline{3213.38} & 4224.95 & t & t & t & t \\
    kddb2010\_test & 748,401 & 1,163,024 & 7.74E-06 & \underline{11.59} & 26.87 & \textbf{11.10} & 46.49 & 490.66 & 255.81 & 69.57 \\
    kdda2010\_test & 510,302 & 20,216,830 & 1.87E-05 & \textbf{9.90} & \underline{29.36} & 61.00 & 148.01 & t & t & t \\
    kddb2010\_train & 19,264,097 & 1,163,024 & 7.97E-06 & \underline{703.36} & 1971.24 & \textbf{387.00} & 1715.50 & t & t & t \\
    kdda2010\_train & 8,407,752 & 20,216,830 & 1.80E-06 & \textbf{341.16} & \underline{842.02} & 2705.94 & t & t & t & t \\
    \hline
    \end{tabular}%
  
  \end{threeparttable}
}
\label{real_lasso}
\end{table}
\paragraph{Performance Analysis}
Table \ref{real_lasso} reports the solving time comparison, where \name (denoted as PDHCG-II-C) demonstrates significant advantages in solving these large-scale sparse QPs. First, regarding efficiency on massive datasets, \name significantly outperforms other solvers on the largest instances, such as the \texttt{avazu} series which contains up to 40 million nonzero elements. A notable example is the \texttt{avazu-app} instance, where \name requires only \textbf{217.70s}, achieving a speedup of approximately \textbf{3.5$\times$} over the second-best solver, HPR-QP (753.65s). Second, the results highlight the superior robustness of our approach; while standard solvers such as SCS (GPU), OSQP, and the commercial solver COPT fail to solve the majority of these large-scale instances within the 2-hour time limit (indicated by "t"), \name successfully converges for all problems to the required precision. Finally, compared to the original PDHCG, our implementation shows consistent improvements, particularly on the harder \texttt{avazu} instances where PDHCG is notably slower. This performance gap underscores the effectiveness of the proposed algorithmic enhancements in handling ill-conditioned, large-scale real-world data.

\section{Conclusion}
\label{sec:conclusion}

In this paper, we presented \name, an enhanced first-order solver tailored for large-scale convex quadratic programming. By integrating Halpern-type acceleration ,a PID-controlled primal-dual weight update and a new adaptive stopping criterion for primal subproblems into the PDHCG framework, \name significantly outperforms both its predecessor and state-of-the-art methods. 

Extensive numerical experiments on standard benchmarks and large-scale real-world Lasso problems demonstrate the superiority of \name. It achieves significant speedups, ranging from $2.5\times$ to $5\times$, over the original PDHCG and consistently outperforms state-of-the-art solvers such as PDQP and HPR-QP in both efficiency and robustness. We release the high-performance CUDA-C version as open-source software to support the broader research community.

Future work will focus on extending \name to distributed multi-GPU environments to handle even larger datasets and improve efficiency. Additionally, we plan to explore the application of these acceleration techniques to Quadratic Constrained QP problems and non-convex QP problems.